\newtheorem{Lemma}      {Lemma} [section]
\newtheorem{Theorem}    [Lemma] {Theorem}
\numberwithin{equation}{section}
\newcommand{\diag}{\mathop{\mathrm{diag}}}
\newcommand{\Aut}{\mathop{\mathrm{Aut}}}
\newcommand{\Out}{\mathop{\mathrm{Out}}}
\newcommand{\Soc}{\mathop{\mathrm{Soc}}}
\numberwithin{equation}{section}
\begin{document}
\title{Generating minimally transitive permutation groups}
\author{Gareth M. Tracey%
\thanks{Electronic address: \texttt{G.M.Tracey@warwick.ac.uk}}} 
\affil{Mathematics Institute, University of Warwick,\\Coventry CV4 7AL, United Kingdom}
\date{June 13, 2015}
\maketitle
\begin{abstract} We improve the upper bounds (in terms of $n$) in \cite{LucMin} and \cite{ShepWie} on the minimal number of elements required to generate a minimally transitive permutation group of degree $n$.\end{abstract}
\section{Introduction}
A transitive permutation group $G\le S_{n}$ is called \emph{minimally transitive} if every proper subgroup of $G$ is intransitive. In this paper, we consider the minimal number of elements $d(G)$ required to generate such a group $G$, in terms of its degree $n$. For a prime factorisation $n=\prod_{p\text{ prime}} p^{n(p)}$ of $n$, we will write $\omega{(n)}:=\sum_{p} n(p)$ and $\mu(n):=\max\left\{n(p)\text{ : }p\text{ prime}\right\}$.

The question of bounding $d(G)$ in terms of $n$ was first considered by Shepperd and Wiegold in \cite{ShepWie}; there, they prove that every minimally transitive group of degree $n$ can be generated by $\omega(n)$ elements. It was then suggested by Pyber (see \cite{Pyber}) to investigate whether or not $\mu(n)+1$ elements would always suffice. A. Lucchini gave a partial answer to this question in \cite{LucMin}, proving that: \emph{if $G$ is a minimally transitive group of degree $n$, and $\mu(n)+1$ elements are not sufficient to generate $G$, then $\omega(n)\ge 2$ and $d(G)\le \lfloor \log_{2}(\omega{(n)}-1)+3\rfloor$.} 

In this note, we offer a complete solution to the problem, proving
\begin{Theorem}\label{Min} Let $G$ be a minimally transitive permutation group of degree $n$. Then $d(G)\le \mu(n)+1$.\end{Theorem}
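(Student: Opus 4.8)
The plan is to induct on $|G|$ (equivalently on $n$), working with the elementary reformulation: a faithful transitive $G\le S_n$ with point stabiliser $H$ (so $|G:H|=n$ and $H$ is core-free) is minimally transitive precisely when $MH\ne G$ for every maximal subgroup $M$ of $G$ --- i.e.\ no maximal subgroup of $G$ is transitive on $G/H$. A first reduction: if $\Phi(G)\ne 1$, choose a minimal normal subgroup $N$ of $G$ inside $\Phi(G)$; then $d(G)=d(G/N)$, and $G/N$ is minimally transitive on the system of $N$-orbits, of degree $n/|N:N\cap H|$, a divisor of $n$, so induction applies. Hence we may assume $\Phi(G)=1$. (The nilpotent case is then immediate: $H$ is subnormal and core-free, forcing $H=1$, so $n=|G|$ and $d(G)=\max_p d(\mathrm{Syl}_p(G))\le \mu(n)$.)

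Next I would invoke the standard ``crown'' description of $d(G)$ in terms of chief factors (Gasch\"utz; Cossey--Kegel--Kov\'acs; Lucchini): $d(G)$ is the maximum, over isomorphism types $A$ of \emph{non-Frattini} (equivalently, complemented) chief factors of $G$, of a contribution $d_A(G)$. When $A$ is abelian of $p$-power order with $\dim_{\mathbb{F}_p}A=r$, occurring with multiplicity $\delta$ in a chief series, and $X:=G/C_G(A)$ (the faithful irreducible linear group through which $G$ acts on the $A$-crown), one has $d_A(G)\le \max\big(d(X),\,\delta+1\big)$, with $d_A(G)=\delta$ in the central case $X=1$; when $A=T^{s}$ is non-abelian, $d_A(G)$ is bounded by the known formula for the number of generators of a crown-based power, which grows only logarithmically in $\delta$. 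It therefore suffices to show $d_A(G)\le \mu(n)+1$ for every crown $A$.

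The crux --- and the only place the minimal-transitivity hypothesis is used --- is the following lemma: \emph{if $A_i=K_i/K_{i-1}$ is a non-Frattini chief factor in a chief series $1=K_0\lhd K_1\lhd\cdots\lhd K_m=G$, then $A_i^{0}:=(K_i\cap H)K_{i-1}/K_{i-1}$ is a proper subgroup of $A_i$.} Indeed $A_i^{0}=A_i$ forces $K_i\subseteq K_{i-1}H$; taking a maximal $M$ with $K_{i-1}\le M$ and $K_i\not\le M$ (which exists since $A_i$ is complemented) gives $MK_i=G$ and $K_i\subseteq K_{i-1}H\subseteq MH$, hence $G=MK_i\subseteq M(MH)=MH$, contradicting minimal transitivity. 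Granting the lemma, $|A_i:A_i^{0}|\ge p$ for each of the $\delta$ factors of an abelian $p$-crown, so $v_p(n)\ge\delta$ and thus $\delta+1\le\mu(n)+1$; and if $X\ne 1$ then $X=G/C_G(A)$ is a strictly smaller minimally transitive group of degree dividing $n$, so $d(X)\le\mu(n)+1$ by induction. For a non-abelian crown $A=T^{s}$ one argues similarly: the lemma forces each of the $\delta$ chief factors to contribute a factor $\ge 2$ to $n$ with prime support confined to $\pi(|T|)$, and weighing this against the slow growth of $d_A(G)$ in $\delta$ (it equals $2$ unless $\delta$ is large relative to the number of automorphism classes of generating pairs of $T^{s}$) yields $d_A(G)\le\mu(n)+1$ in this case too. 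Taking the maximum over all crowns completes the induction.

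The lemma is short; I expect the real effort to lie in the crown-theoretic bookkeeping around it. Assembling the bound $d_A(G)\le\max(d(X),\delta+1)$ in full generality involves several cases --- central versus non-central factors, the $H^{1}(X,A)$ correction term, and the precise threshold in $\delta$ past which a crown-based power needs an extra generator --- and the insoluble case demands a sharp enough form of the generation number of a non-abelian crown-based power that its logarithmic growth is genuinely dominated by the constraints the key lemma places on $\delta$ (this interaction is delicate for families of simple groups of both small and large order). One also has to handle the small cases ($\mu(n)\le 2$, central crowns, the trivial group) and check that each reduction really decreases $|G|$ while keeping the degree a divisor of $n$.
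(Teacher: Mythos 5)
Your key lemma is correct, and it is in substance the same engine the paper runs on: it is exactly the statement that each non-Frattini chief factor forces a nontrivial jump $|K_iH:K_{i-1}H|>1$, i.e.\ the fact that the orbit sizes $|\Delta_{i}|$ grow strictly at each socle factor. The abelian-crown half of your argument is essentially Lucchini's and is fine in substance, but note one slip that recurs in your write-up: an abstract quotient such as $X=G/C_G(A)$ (or $G/N$ in your Frattini reduction) is \emph{not} itself a minimally transitive permutation group of degree dividing $n$, because it need not act faithfully on any block system. Induction only applies to $G/K$, where $K$ is the kernel of the action on the system of $M$-orbits; to get from $d(G/K)\le \mu(n)+1$ to $d(G/M)\le \mu(n)+1$ you must lift: choose $\mu(n)+1$ elements generating $G$ modulo $K$, observe that together with $M$ they generate a transitive subgroup, and invoke minimal transitivity to conclude they generate $G$ modulo $M$. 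This is precisely how the paper handles proper quotients, and your reductions need the same patch (it works, since $C_G(A)\neq 1$ for an abelian chief factor and $\Phi(G)$-quotients lift generators).

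The genuine gap is the nonabelian crown case, which is the actual content of the theorem (the abelian case was already settled by Lucchini). ``Each of the $\delta$ factors contributes a factor $\ge 2$ to $n$ with primes in $\pi(|T|)$, weigh this against the slow growth of $d_A(G)$'' is an assertion, not an argument. Pigeonholing only yields $\delta\le |\pi(|T|)|\,\mu(n)$, and you must then verify that this lies below the exact generation threshold for a nonabelian crown-based power at $d=\mu(n)+1$, namely $\delta\le P_{L,N}(d)\,|N|^{d}/|C_{\Aut(N)}(L/N)|$ for the monolithic group $L$ of the crown; moreover that criterion requires $d\ge d(L)$, which is not automatic (your ``number of automorphism classes of generating pairs of $T^s$'' heuristic is the $d=2$ case and is not the relevant threshold, since $L/N$ need not be trivial -- the paper gets $d(L)\le \mu(n)+1$ from the proper-quotient claim plus Lucchini--Menegazzo). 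Carrying out the comparison needs concrete inputs that your sketch does not contain: $P_{L,N}(d)\ge 53/90$ (Detomi--Lucchini), $|C_{\Aut(N)}(L/N)|\le t|S|^{t}|\Out(S)|$ (Lucchini--Morigi), $|\Out(S)|\le |S|^{1/4}$, and control on how many distinct primes can absorb the $\delta$ contributions -- the paper's main new ingredient (its Lemma 3.1, via Liebeck--Praeger--Saxl) produces a set of at most $\max(4,\,r/2+1)$ primes meeting the index of every proper subgroup of $S$. Once those threshold estimates are imported one can in fact check that even the crude count $|\pi(|T|)|\le\log_2|T|$ clears the $53/90$-threshold, so the route you sketch is completable; but that quantitative verification, which you yourself flag as ``delicate'' and leave undone, is exactly the missing heart of the proof.
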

Our approach follows along the same lines as Lucchini's proof of the main theorem in \cite{LucMin}. Indeed, his methods suffice to prove Theorem \ref{Min} in the case when a minimal normal subgroup of $G$ is abelian. Thus, our main efforts will be concerned with the case when a minimal normal subgroup of $G$ is a direct product of isomorphic nonabelian simple groups. The key step in this direction is Lemma \ref{SimplePrimes}, which we prove in Section 3. We use Section 2 to outline the method of \emph{crown-based powers} due to Lucchini and F. Dalla Volta; this will serve as the basis for our arguments. Finally, we prove Theorem \ref{Min} in Section 4.    
\section{Crown-based powers}
In this section, we outline an approach to study the minimal generation of finite groups, which is due to F. Dalla Volta and A. Lucchini. So let $G$ be a finite group, with $d(G)=d>2$, and let $M$ be a normal subgroup of $G$, maximal with the property that $d(G/M)=d$. Then $G/M$ needs more generators than any proper quotient of $G/M$, and hence, as we shall see below, $G/M$ takes on a very particular structure.  

We describe this structure as follows: let $L$ be a finite group, with a unique minimal normal subgroup $N$. If $N$ is abelian, then assume further that $N$ is complemented in $L$. Now, for a positive integer $k$, set $L_{k}$ to be the subgroup of the direct product $L^{k}$ defined as follows
$$L_{k}:=\left\{(x_{1},x_{2},\hdots,x_{k})\text{ : }x_{i}\in L\text{, }Nx_{i}=Nx_{j}\text{ for all }i,j\right\}$$
Equivalently, $L_{k}:=\diag(L^{k})N^{k}$, where $\diag{(L^{k})}$ denotes the diagonal subgroup of $L^{k}$. The group $L_{k}$ is called the \emph{crown-based power of $L$ of size $k$}. 

We can now state the theorem of Dalla Volta and Lucchini.
\begin{Theorem}[{\bf\cite{DVLuc}, Theorem 1.4}]\label{CrownTheorem} Let $G$ be a finite group, with $d(G)\ge 3$, which requires more generators than any of its proper quotients. Then there exists a finite group $L$, with a unique minimal normal subgroup $N$, which is either nonabelian or complemented in $L$, and a positive integer $k\ge 2$, such that $G\cong L_{k}$.\end{Theorem}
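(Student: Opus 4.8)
The plan is to run everything through the theory of \emph{crowns}. Recall that to a complemented (``non-Frattini'') chief factor $A$ of a finite group $G$ one attaches: the monolithic primitive group $L_A$ (with $\Soc(L_A)\cong A$, and $L_A\cong G/C_G(A)$ when $A$ is nonabelian); the integer $\delta_A(G)$, the number of complemented chief factors $G$-equivalent to $A$ in a chief series of $G$ (well defined by a Jordan--H\"older argument); and the normal subgroup $R_A(G)$, the intersection of all $M\trianglelefteq G$ for which $G/M$ is monolithic with socle of type $A$. The crown theorem (Gasch\"utz for abelian $A$, with the extension to arbitrary $A$ due to later work) then gives $G/R_A(G)\cong (L_A)_{\delta_A(G)}$. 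Hence it suffices to produce one complemented chief factor $A$ of $G$ with $R_A(G)=1$ and $\delta_A(G)\ge 2$: the first condition yields $G\cong (L_A)_{\delta_A(G)}$, and we then take $L=L_A$, $N=\Soc(L_A)$, $k=\delta_A(G)$, the required minimal normal subgroup being nonabelian or complemented precisely because $L_A$ is monolithic primitive.

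First I would reduce to $\Phi(G)=1$: if a minimal normal subgroup $M$ of $G$ lay in $\Phi(G)$, then every generating set of $G/M$ lifts to one of $G$, so $d(G/M)=d(G)$, contradicting the hypothesis. Thus $\Phi(G)=1$, $\Soc(G)$ is the direct product of the minimal normal subgroups of $G$, each of them is complemented, and they realise finitely many crown types $A_1,\dots,A_t$. Since no minimal normal subgroup of $G$ lies in the co-crown $R_A(G)$ of its own type $A$ (a short argument producing suitable monolithic quotients), one gets $\bigcap_{j} R_{A_j}(G)=1$, so $G$ embeds subdirectly in $\prod_{j} G/R_{A_j}(G)=\prod_{j} (L_{A_j})_{\delta_{A_j}(G)}$. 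The technical heart is then to show that $d(G)$ is \emph{realised on a single one of these crown quotients}, say $d(G)=d\bigl(G/R_{A_j}(G)\bigr)$: this is where one must use, via Gasch\"utz's analysis of the number of generators of an extension applied along a chief series together with $\Phi(G)=1$, that the ``jumps'' of $d$ along the series are governed crown by crown and do not accumulate across distinct crowns. Granting it, put $A=A_j$; then $G/R_A(G)$ is a quotient of $G$ needing $d(G)$ generators, so by hypothesis it is not a \emph{proper} quotient, i.e. $R_A(G)=1$ and $G\cong (L_A)_{\delta_A(G)}$.

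It remains to check $k:=\delta_A(G)\ge 2$. If $k=1$ then $G=(L_A)_1=L_A$ is a monolithic primitive group with $d(G)\ge 3$; but a monolithic primitive group $L$ satisfies $d(L)\le\max\{2,\,d(L/\Soc L)\}$, whence $d(G)\le d(G/\Soc G)$, contradicting that $G$ needs more generators than every proper quotient. Hence $k\ge 2$, completing the proof. I expect the decisive obstacle to be exactly the middle step: establishing that, for a Frattini-free group, the generator number is attained on one of its crown quotients. Everything surrounding it is essentially formal, but this step is the substance of the Dalla Volta--Lucchini theorem and draws on the full Gasch\"utz/Lucchini machinery relating $d(G)$ to numbers of complements, to the cohomology groups $H^1(G/C_G(A),A)$, and to the combinatorial bookkeeping of how complemented chief factors of a fixed type build up in a chief series.
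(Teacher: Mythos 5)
The paper itself gives no proof of this statement---it is quoted from \cite{DVLuc}---so the only meaningful comparison is with the original Dalla Volta--Lucchini argument, and measured against that your proposal is a framework rather than a proof. The peripheral steps you give are correct: the Frattini reduction (a minimal normal subgroup inside $\Phi(G)$ would force $d(G/M)=d(G)$), the identification $G/R_A(G)\cong (L_A)_{\delta_A(G)}$ (a genuine result of crown theory, though for nonabelian chief factors it postdates \cite{DVLuc}, being due to Jim\'enez-Seral--Lafuente and Detomi--Lucchini), and the exclusion of $k=1$ via the Lucchini--Menegazzo bound $d(L)\le\max\{2,\,d(L/\Soc{L})\}$ for monolithic $L$.

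The genuine gap is exactly the step you yourself flag and then assume: that for a Frattini-free group $d(G)$ is attained on a single crown quotient $G/R_{A_j}(G)$. This is not a routine consequence of ``Gasch\"utz's analysis along a chief series'': Gasch\"utz's machinery (complement counting, $H^{1}$, presentation rank) controls the behaviour of $d$ only across \emph{abelian} chief factors, and gives no handle on how $d$ can jump at a nonabelian chief factor. Controlling those jumps is precisely where Dalla Volta--Lucchini and Lucchini--Menegazzo must bring in counting and probabilistic arguments about generating tuples---the same circle of ideas as Theorem \ref{CrownBound}(ii) and Theorem \ref{53Over90} used elsewhere in this paper. Worse, the standard proofs of your middle claim proceed \emph{through} the theorem being proved (choose $M$ maximal normal with $d(G/M)=d(G)$, apply the theorem to $G/M$, and identify the resulting crown-based power with a crown quotient of $G$), so invoking it as a lemma without an independent argument is essentially circular. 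As written, you have reduced the theorem to an unproved assertion that carries the entire content of the theorem; supplying that argument, in particular for nonabelian crowns, is the missing substance.
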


It is clear that, for fixed $L$, $d(L_{k})$ increases with $k$. To use this result, however, we will need a bound on $d(L_{k})$, in terms of $k$. This is provided by the next two theorems. Before giving the statements, we require some additional notation: for a group $G$ and a normal subgroup $M$ of $G$, let $P_{G,M}(d)$ denote the conditional probability that $d$ randomly chosen elements of $G$ generate $G$, given that their images modulo $M$ generate $G/M$. 
\begin{Theorem}[{\bf\cite{LucMin}, Theorem 2.1 and \cite{DVLuc}, Theorem 2.7}]\label{CrownBound} Let $L$ be a finite group with a unique minimal normal subgroup $N$ which is either nonabelian or complemented in $L$, and let $k$ be a positive integer. Assume also that $d(L)\le d$. Then \begin{enumerate}[(i)] 
\item If $N$ is abelian, then $d(L_{k})\le \max\left\{d(L),k+1\right\}$;
\item If $N$ is nonabelian, then $d(L_{k})\le d$ if and only if $k\le P_{L,N}(d)|N|^{d}/|C_{\Aut{(N)}}(L/N)|$.
\end{enumerate}\end{Theorem}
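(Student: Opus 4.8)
The plan is to study generating $d$-tuples of $L_{k}$ through the normal subgroup $N^{k}\trianglelefteq L_{k}$, which satisfies $L_{k}/N^{k}\cong L/N$, together with the observation that each of the $k$ coordinate copies $N_{i}$ of $N$ is normal in $L_{k}$ and is a chief factor of $L$. The main tool throughout is a Gaschütz-type lifting argument: one fixes a generating $d$-tuple of the quotient $L/N$ (which exists precisely because $d\ge d(L)\ge d(L/N)$) and counts the lifts to $L_{k}$ that generate $L_{k}$, reducing the whole question to whether this count is positive.

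For part (i), since $N$ is abelian and complemented we may write $L=N\rtimes H$, and then $L_{k}=N^{k}\rtimes\diag(H^{k})\cong V^{k}\rtimes H$ where $V:=N$ is an irreducible module over $\mathbb{F}_{p}[H]$ and $V^{k}$ is a direct sum of $k$ isomorphic copies. I would first treat the easy case $V$ trivial (i.e.\ $N\le Z(L)$), where $L_{k}\cong C_{p}^{k}\times H$ and a direct generation count gives the bound; when $V$ is nontrivial I would fix generators $h_{1},\ldots,h_{d}$ of $H$ and count the lifts $(v_{1}h_{1},\ldots,v_{d}h_{d})$, $v_{i}\in V^{k}$, generating $L_{k}$, noting that the subgroup generated meets $V^{k}$ in an $H$-submodule so that the problem becomes a linear-algebra question over the endomorphism ring of $V$ about when $d$ vectors plus the ambient action span $V^{k}$. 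The obstruction is controlled by the multiplicity of $V$ as a composition factor and by $H^{1}(H,V)$, and packaging these estimates shows that $d$ generators suffice once $d\ge d(L)$ and $d\ge k+1$; the ``$+1$'' absorbs the cohomological obstruction, exactly as in the familiar fact $d(C_{p}\times C_{p})=2$.

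For part (ii), with $N$ nonabelian we have $C_{L}(N)=1$, so $\Inn(N)\le L\le\Aut(N)$ and $N$ is characteristic in $L$, while $L_{k}/N_{i}\cong L_{k-1}$ for each coordinate copy $N_{i}$. The crux is to characterise when $(g^{(1)},\ldots,g^{(d)})$, with $g^{(j)}=(g^{(j)}_{1},\ldots,g^{(j)}_{k})$, generates $L_{k}$: necessarily the images generate $L/N$ and each coordinate tuple $(g^{(1)}_{i},\ldots,g^{(d)}_{i})$ generates $L$, and conversely, by a Goursat/Hall-type analysis of the subgroups of $L_{k}$ supplementing $N^{k}$, the tuple fails to generate exactly when two coordinates $i\ne i'$ ``collapse onto a diagonal'', i.e.\ the projection to the $(i,i')$-factor lies in a subgroup $\{(x,\alpha(x)):x\in L\}$ for some $\alpha\in\Aut(N)$ inducing the identity on $L/N$ — and such $\alpha$ are counted by $|C_{\Aut(N)}(L/N)|$. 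Fixing a generating $d$-tuple of $L/N$, the per-coordinate lifts contribute a factor $P_{L,N}(d)|N|^{d}$ each, while the pairwise diagonal constraints make the number of generating lifts behave like the number of ways of choosing $k$ points pairwise inequivalent under a free $C_{\Aut(N)}(L/N)$-action, namely $\prod_{i=0}^{k-1}\bigl(P_{L,N}(d)|N|^{d}-i\,|C_{\Aut(N)}(L/N)|\bigr)$ up to harmless factors; this is nonzero precisely when $k\le P_{L,N}(d)|N|^{d}/|C_{\Aut(N)}(L/N)|$, which (together with $d\ge d(L)$) is the asserted equivalence.

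I expect the main obstacle, in both parts but especially in (ii), to be the structural step: pinning down exactly which subgroups of $L_{k}$ supplement $N^{k}$ without equalling $L_{k}$, and verifying that they are parametrised by this $\Aut(N)$-data so that the diagonal-avoidance count matches the stated threshold on the nose (including checking that the relevant action is free, so the product formula is exact). A secondary but real subtlety is the bookkeeping needed to combine ``$d\ge d(L)$'' with ``a generating lift exists'' so that the final statement is a genuine if-and-only-if at the boundary value of $k$, rather than merely an inequality.
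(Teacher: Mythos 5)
First, a point of orientation: the paper does not prove Theorem \ref{CrownBound} at all; it is quoted from \cite{LucMin} (part (i)) and \cite{DVLuc} (part (ii)), so your proposal can only be measured against those sources. In outline you do follow their strategy: fix a generating $d$-tuple of $L/N\cong L_{k}/N^{k}$, count generating lifts, and, in the nonabelian case, observe that the only way a lift can fail to generate is that two coordinates collapse onto the graph of an automorphism acting trivially on $L/N$, these being parametrised by $C_{\Aut(N)}(L/N)$, so that the admissible choices number $\prod_{i=0}^{k-1}\bigl(\phi-i\,c\bigr)$ with $\phi$ the fibre count and $c=|C_{\Aut(N)}(L/N)|$. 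That is the right shape.

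As a proof, however, the load-bearing steps are missing, largely the ones you yourself flag plus some you do not. (a) The two-coordinate reduction is not a routine Goursat argument: Goursat (together with $C_{L}(N)=1$, $Z(N)=1$ and uniqueness of $N$) handles a single pair, showing a proper subgroup of $L_{2}$ with full coordinate projections is the graph of an automorphism centralising $L/N$ and that such automorphisms correspond bijectively to elements of $C_{\Aut(N)}(L/N)$; the real content for $k\ge 3$ is that failure is always witnessed by \emph{some} pair, which needs the identification of the minimal normal subgroups of $L_{k-1}$ as the coordinate copies $N_{1},\dots,N_{k-1}$ and an argument that a surjection $L_{k-1}\to L$ whose graph lies in $L_{k}$ has kernel a product of coordinate copies of $N$, hence is $\alpha\circ\pi_{j}$ for some coordinate $j$. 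You name this as an obstacle but give no argument for it. (b) You silently equate the number of generating lifts over your \emph{fixed} generating tuple of $L/N$ with $P_{L,N}(d)|N|^{d}$; since $P_{L,N}(d)$ is a conditional probability, i.e.\ an average over all generating tuples of $L/N$, you need the counting form of Gasch\"utz's lemma (M\"obius inversion over the subgroups supplementing $N$; no abelian hypothesis required) to know the fibre count is independent of the chosen tuple, and both directions of the if-and-only-if rest on this. (c) Freeness of the $C_{\Aut(N)}(L/N)$-action on generating lifts (which follows from $Z(N)=1$) is what makes $c$ divide $\phi$; without it, positivity of your product only gives $k<\phi/c+1$, which is weaker than the stated threshold $k\le\phi/c$, so the boundary case of the equivalence genuinely needs it. (d) Part (i) is only a gesture: to obtain $d(L_{k})\le\max\{d(L),k+1\}$ one needs the precise criterion for $d(N^{k}\rtimes H)\le d$ in terms of dimensions over $\mathrm{End}_{L/N}(N)$ and the first cohomology, together with a nontrivial estimate of Aschbacher--Guralnick type on that cohomology; the sentence that the ``$+1$ absorbs the cohomological obstruction'' asserts the conclusion rather than proving it.
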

We will also need an estimate for $P_{L,N}(d)$.
\begin{Theorem}[{\bf\cite{DetLuc}, Theorem 1.1}]\label{53Over90} Let $L$ be a finite group, with a unique minimal normal subgroup $N$, which is nonabelian, and suppose that $d\ge d(L)$. Then $P_{L,N}(d)\ge 53/90$.\end{Theorem}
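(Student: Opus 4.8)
\emph{Proof proposal.} By Möbius inversion on the subgroup lattice of $L$, together with the identity $P_{L,N}(d)=P_L(d)/P_{L/N}(d)$ (where $P_G(d)=\sum_{H\le G}\mu_G(H,G)\,|G:H|^{-d}$ is the probability that $d$ random elements generate $G$, and $\mu_G$ denotes the Möbius function of the subgroup lattice), one obtains
$$P_{L,N}(d)=\sum_{\substack{H\le L\\ HN=L}}\mu_L(H,L)\,|L:H|^{-d}.$$
Every subgroup $H$ occurring here supplements $N$, and every maximal such subgroup $M$ is core-free — otherwise $\core_L(M)$ would contain the unique minimal normal subgroup $N$ — so $L$ acts faithfully and primitively on the cosets of $M$, with socle $N=S^m$ for a nonabelian simple group $S$ (here $C_L(N)=1$, so $L\le\Aut(S)\wr\Sym(m)$). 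The term $H=L$ contributes $1$; the maximal terms contribute $-\sum_{[M]}|L:M|^{1-d}$, summed over conjugacy classes of core-free maximal subgroups (each self-normalising, hence with $|L:M|$ conjugates, and $|L:M|=|N:M\cap N|\ge5$); in particular the union bound $P_{L,N}(d)\ge1-\sum_{[M]}|L:M|^{1-d}$ holds. Since the minimum of $P_{L,N}(d)$ over $d\ge d(L)$ is attained at $d=d(L)\ge2$, it suffices to treat $d=d(L)$.

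\textbf{The bulk of the cases.} Suppose $m\ge2$, or $m=1$ and $|S|$ is large, or $m=1$ and $L$ is almost simple with $|L|$ large. The O'Nan--Scott theorem, together with the classification-based lower bounds on minimal degrees of primitive permutation groups with socle $S^m$ and on the number of conjugacy classes of maximal subgroups of each index, show that $\sum_{[M]}|L:M|^{1-d}$ is small; the union bound then already gives $P_{L,N}(d)>53/90$ for every $d\ge d(L)$, with room to spare.

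\textbf{The extremal case.} There remains a finite list of groups $L$ — principally those that are almost simple with socle $A_5$, $A_6$, $\mathrm{PSL}(2,7)$, or one of a few further small simple groups — for which the union bound is too weak: for $L=A_6$ it delivers only $13/30<53/90$, so the positive higher-order Möbius terms must be taken into account. For each such $L$ one computes $P_{L,N}(d)$ with $d=d(L)$ (always $d(L)=2$ for the groups on this list) directly from the known subgroup lattice, and checks that the minimum over the list is $P_{A_6}(2)=53/90$, attained only for $L=N=A_6$.

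\emph{Main obstacle.} The difficulty is concentrated in the extremal case. A plain union bound over maximal subgroups fails already for $A_5$ and $A_6$, so one must either carry the inclusion--exclusion all the way through or evaluate $P_{L,N}$ exactly for the small groups; and one must invoke the classification of finite simple groups both to reduce the extremal analysis to a finite list and to supply the large-index estimates that dispose of everything else. Verifying that $53/90$ is the \emph{exact} minimal value, rather than merely a convenient lower bound, is what makes the argument delicate.
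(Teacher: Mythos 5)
This statement is not proved in the paper at all: it is imported verbatim as Theorem 1.1 of Detomi--Lucchini \cite{DetLuc}, and the citation is the intended justification. So there is no internal argument to compare yours with; what you have written is an attempted reconstruction of an entire research paper. Your skeleton does have the right shape — the expansion of $P_{L,N}(d)$ over supplements of $N$, the observation that the maximal supplements are core-free so that $L$ acts faithfully and primitively with socle $N$, a union bound for the generic case, and exact evaluation for a short list of small almost simple groups, with the constant arising as $P_{A_6}(2)=53/90$ — and this is indeed the flavour of the source.

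As a proof, however, the substantive content is missing rather than merely compressed. The step labelled ``the bulk of the cases'' is exactly where the work lies: one needs CFSG-based estimates, uniform in the primitive type (almost simple, product action, diagonal, twisted wreath), for the minimal index of a core-free maximal subgroup of a monolithic group with socle $S^t$ and for the number of conjugacy classes of maximal supplements of each index; for $t\ge 2$ the number of classes grows with $t$, so ``the sum is small'' is not self-evident and must be proved, and the reduction of the exceptional configurations to an explicit finite list of almost simple groups is asserted, not derived. Two smaller points also need repair: the reduction to $d=d(L)$ assumes $P_{L,N}(d)$ is non-decreasing in $d$, which does not follow from your Möbius expansion (the higher terms are not of one sign) and needs a separate argument; and the inequality $P_{L,N}(d)\ge 1-\sum_{[M]}|L:M|^{1-d}$ should be justified probabilistically (conditioning on the images generating $L/N$, the chance that all $d$ elements lie in a fixed maximal supplement $M$ is exactly $|L:M|^{-d}$, since $M/(M\cap N)\cong L/N$), not by truncating the Möbius sum, which by itself gives no bound. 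Finally, verifying that $53/90$ is the exact minimum requires the full subgroup-lattice computations for the exceptional groups, which you defer. For the purposes of this paper, the correct move is simply to cite \cite{DetLuc}; if you want to reprove the theorem, the missing quantitative estimates are the proof.
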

  
\section{Indices of proper subgroups in finite simple groups}
Before stating and proving the main result of this section, we need some standard notation: for a positive integer $m$, $\pi(m)$ denotes the set of prime divisors of $m$. Our lemma can now be stated as follows. 
\begin{Lemma}\label{SimplePrimes} Let $S$ be a nonabelian simple group. Then there exists a set of primes $\Gamma=\Gamma(S)$ with the following properties:\begin{enumerate}[(i)]
\item $|\Gamma|\le f(S)$, where $f(S):=r/2+1$ if $S$ is an alternating group of degree $r$, and $f(S):=4$ otherwise;
\item $\pi(|S:H|)$ intersects $\Gamma$ nontrivially for every proper subgroup $H$ of $S$.\end{enumerate}\end{Lemma}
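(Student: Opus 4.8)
The plan is to reduce the statement to maximal subgroups and then invoke the classification of finite simple groups, treating the alternating, sporadic and Lie type cases in turn. For the reduction, observe that if $H<M<S$ with $M$ maximal, then $|S:M|$ divides $|S:H|$, so $\pi(|S:M|)\subseteq\pi(|S:H|)$; hence it suffices to produce, for each $S$, a set $\Gamma$ of at most $f(S)$ primes meeting $\pi(|S:M|)$ for every \emph{maximal} subgroup $M$ of $S$.

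If $S=\mathrm{Alt}(r)$ one may simply take $\Gamma=\pi(|S|)$, the set of all prime divisors of $|S|=r!/2$: property (ii) is then automatic, since every proper subgroup has index exceeding $1$ and dividing $|S|$. The prime divisors of $r!/2$ are exactly the primes at most $r$, and an elementary count gives $\pi(r)\le r/2+1=f(S)$, so this $\Gamma$ has the required size. (A more structured choice is also available: a prime $p_{0}$ in $(r/2,r]$ exists by Bertrand's postulate, and by Jordan's theorem on primitive groups containing a cycle of prime length every maximal subgroup of $S$ avoiding $p_{0}$ is intransitive, hence of index $\binom{r}{k}$ for some $k<r/2$; adjoining one prime divisor of each such binomial coefficient then yields a suitable $\Gamma$. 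But the crude choice above already suffices.) For the finitely many sporadic groups a $\Gamma$ of size at most $4$ can be exhibited by inspecting the known lists of maximal subgroups.

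The substance of the lemma is the case where $S$ is a simple group of Lie type in characteristic $p$, and here the plan is to take $\Gamma=\{p\}\cup\Delta$ with $\Delta$ a small set of primitive prime divisors. Two ingredients drive this. First, a standard consequence of the Borel--Tits theorem is that a maximal subgroup of $S$ containing a Sylow $p$-subgroup of $S$ must be a parabolic subgroup; equivalently, every non-parabolic maximal subgroup of $S$ has index divisible by $p$. Thus $p$ alone disposes of \emph{all} non-parabolic maximal subgroups, and only the finitely many conjugacy classes of maximal parabolic subgroups remain. Second, for a maximal parabolic subgroup $P$ the Levi factor has semisimple rank one less than that of $S$, and a short check over the root systems shows that the degrees of its Weyl group are all strictly smaller than the Coxeter number $h$ of $S$; consequently the cyclotomic polynomial $\Phi_{h}(q)$ divides $|S|$ but divides the order of no maximal parabolic subgroup, so any primitive prime divisor of $q^{h}-1$ divides $|S:P|$ for every maximal parabolic $P$. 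Adjoining such a prime to $\Delta$ therefore settles the parabolics; in the untwisted case this already gives $|\Gamma|\le 2$ (one primitive prime divisor beyond $p$). For twisted groups one replaces $h$ and $q^{h}-1$ by the cyclotomic expression coming from the relative root datum, and in general at most three primitive prime divisors are needed, so $|\Gamma|\le 4$ throughout. Whenever the relevant $q^{e}-1$ has no primitive prime divisor --- the finitely many Zsygmondy exceptions, together with a short list of small-rank or small-field groups --- the required $\Gamma$, still of size at most $4$, is exhibited directly.

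The hard part will be this Lie type analysis carried out uniformly: confirming across the classical and exceptional, split and twisted families that the indices of the maximal parabolic subgroups are caught by a bounded set of primitive prime divisors of the appropriate $q^{e}-1$, and handling cleanly the Zsygmondy exceptions and the low-rank coincidences, where $S$ may also be an alternating group or where the ``$\Phi_{h}(q)$'' mechanism degenerates (as already for $\mathrm{PSL}_{2}(q)$ with $q+1$ a power of $2$). By comparison the alternating and sporadic cases are routine.
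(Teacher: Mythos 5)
Your route is sound, but it is genuinely different from the paper's, and most of its weight sits in the part you defer. The paper does not re-derive the Lie type statement at all: it quotes Liebeck--Praeger--Saxl (Theorem 4 for alternating groups, Corollary 6 for the rest), which hands it a set $\Pi$ of at most $3$ primes hitting every subgroup index outside an explicit list of exceptions (their Table 10.7), and then simply adjoins one further prime to deal with each exceptional configuration (plus ad hoc treatment of $L_{2}(p)$ and a few small groups). Your plan --- reduce to maximal subgroups, use the Borel--Tits consequence that a non-parabolic maximal subgroup has index divisible by the defining characteristic $p$, and catch the maximal parabolics with primitive prime divisors of $q^{h}-1$ (or its twisted analogue) --- is exactly the engine inside the LPS proof, so it would work, but the ``hard part'' you acknowledge (uniform verification over twisted families, the Zsigmondy exceptions such as $L_{6}(2)$, $Sp_{6}(2)$, $P\Omega^{+}_{8}(2)$, $U_{4}(2)$ and the $L_{2}(q)$ with $q+1$ a $2$-power, the low-rank coincidences, and all $26$ sporadic groups by inspection) is precisely the case analysis the paper outsources by citation; note that the exceptional groups your method throws up are largely the same ones the paper patches from Table 10.7. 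Two smaller points: (a) the statement ``a maximal subgroup containing a Sylow $p$-subgroup is parabolic'' needs a precise reference, since Borel--Tits as usually stated requires a nontrivial normal $p$-subgroup, so ruling out $O_{p}(M)=1$ for a maximal overgroup of a Sylow $p$-subgroup is an extra (standard, but citable) step; and your claim that at most three primitive prime divisors are needed in the twisted cases is asserted rather than checked (in fact one usually suffices). On the plus side, your alternating-group observation is a genuine simplification: since the lemma only demands $|\Gamma|\le r/2+1$, taking $\Gamma=\pi(|S|)$ and noting $|\pi(|S|)|\le r/2+1$ already settles that case trivially, whereas the paper builds a finer $\Gamma$ from $\{p,q\}$ and prime divisors of the binomial coefficients $\binom{r}{k}$ via LPS Theorem 4 --- a construction that is stronger than what the stated bound requires.
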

\begin{proof} If $S=L_{2}(p)$, for some prime $p$, then since every maximal subgroup $M$ of $S$ has index divisible by either $p$ or $p+1$ (see \cite{Dickson}, for example), the result is clear. If $S=L_{2}(8)$, $L_{3}(3)$, $U_{3}(3)$ or $Sp_{4}(8)$, then direct computation using MAGMA (or Tables 8.1 to 8.6 and Table 8.14 in \cite{DerekColvaJohn}), implies that each maximal subgroup of $S$ has index divisible by at least one of the primes in $\left\{2,3\right\}$, $\left\{2,13\right\}$, $\left\{3,7\right\}$, and $\left\{2,3\right\}$, respectively.

Next, assume that $S=A_{r}$ is an alternating group of degree $r$, and let $p$ and $q$ be the two largest primes not exceeding $r$, where $p>q$. If $r=p$, then we can take $\Gamma:=\left\{r,q\right\}$, by Theorem 4 of \cite{LiePraeSax}. So assume that $p<r$, and for each $k$ in $p\le k\le r-1$, choose a prime divisor $q_{k}$ of $\binom{r}{k}$. Then set $\Gamma:=\Gamma(A_{r})=\left\{q_{p},\hdots,q_{r-1}\right\}\cup\left\{p,q\right\}$. We claim that $\Gamma$ satisfies (i) and (ii). To see this, note that $|\Gamma|\le r-p+2$, which is less than $r/2+2$ by Bertrand's postulate. This proves (i). To see that (ii) holds, let $H$ be a proper subgroup of $A_{r}$. If $p$ or $q$ does not divide $|H|$ then we are done, so assume that $pq$ divides $|H|$. Then $A_{k}\unlhd H\le S_{k}\times S_{r-k}$, for some $k$ with $p\le k\le r-1$, by Theorem 4 of \cite{LiePraeSax}. Hence, $H$ has index divisible by $\binom{r}{k}$, and (ii) follows.

So assume that $S$ is not one of the simple groups considered in the first two paragraphs above, and let $\Pi=\Pi(S)$ be the set of prime divisors of $|S|$ discussed in Corollary 6 of \cite{LiePraeSax}, so that $|\Pi|\le 3$. If $S$ does not occur in the left hand column of Table 10.7 in \cite{LiePraeSax}, then $\Gamma:=\Pi$ satisfies the conclusion of the lemma, by Corollary 6 of \cite{LiePraeSax}, so assume otherwise. 

Then $S$ is one of the simple groups in the left hand column of Table 10.7 in \cite{LiePraeSax}; we need to prove that there exists a set $\Gamma$ as in the statement of the lemma. If $H<S$ is not one of the exceptions listed in the middle column of Table 10.7, then $|S:H|$ intersects $\Pi$ non-trivially. Thus, all we need to prove is that there exists a prime $p$ such that, whenever $H$ is one of these exceptional subgroups, then $p$ divides $|S:H|$. Indeed, in this case, $\Gamma:=\Pi\cup \left\{p\right\}$ gives us what we need. 

So let $H$ be one of these subgroups. We consider each of the possibilities from Table 10.7 of \cite{LiePraeSax}:\begin{enumerate}
\item $S=PSp_{2m}(q)$ ($m$, $q$ even) or $P\Omega_{2m+1}(q)$ ($m$ even, $q$ odd), and $\Omega^{-}_{2m}(q) \unlhd H$. Then $H\le N_{S}(\Omega^{-}_{2m}(q))$, so $|S:N_{S}(\Omega^{-}_{2m}(q))|$ divides $|S:H|$. But $|N_{S}(\Omega^{-}_{2m}(q)):\Omega^{-}_{2m}(q)|\le 2$ using Corollary 2.10.4 part (i) and Table 2.1.D of \cite{LieKleid} and, for each of the two choices of $S$, we have $|S:\Omega^{-}_{2m}(q)|=q^{m}(q^{m}-1)$. Choosing $p$ so that $q=p^{f}$ now works.
 \item $S=P\Omega^{+}_{2m}(q)$ ($m$ even, $q$ odd), and $\Omega_{2m-1}(q) \unlhd H$. As above, $H\le N_{S}(\Omega_{2m-1}(q))$, and we use Corollary 2.10.4 part (i) and Table 2.1.D of \cite{LieKleid} to conclude that $|N_{S}(\Omega_{2m-1}(q)):\Omega_{2m-1}(q)|\le 2$. It follows that $\frac{1}{2}q^{m-1}(q^{m}-1)=|S:\Omega_{2m-1}(q)|$ divides $2|S:H|$. Since $m\ge 4$, choosing $p$ so that $q=p^{f}$ again works.
\item $S=PSp_{4}(q)$ and $PSp_{2}(q^{2}) \unlhd H$. Then $H\le N_{S}(\Omega_{2m-1}(q))$, and Corollary 2.10.4 part (i) and Table 2.1.D of \cite{LieKleid} gives $|N_{S}(PSp_{2}(q^{2})):PSp_{2}(q^{2})|\le 2$. It follows that $q^{2}(q^{2}-1)=|S:PSp_{2}(q^{2})|$ divides $2|S:H|$. Again, the prime $p$ satisfying $q=p^{f}$, for some $f$, gives us what we need.
\item In each of the remaining cases (see Table 10.7 in \cite{LieKleid}), we are given a tuple ($S$, $Y_{1}$,$\hdots$, $Y_{t(S)}$), where $t(S)\le 4$, $S$ is one of $L_{2}(8)$, $L_{3}(3)$, $L_{6}(2)$, $U_{3}(3)$, $U_{3}(3)$, $U_{3}(5)$, $U_{4}(2)$, $U_{4}(3)$, $U_{5}(2)$, $U_{6}(2)$, $PSp_{4}(7)$, $PSp_{4}(8)$, $PSp_{6}(2)$, $P\Omega^{+}_{8}(2)$, $G_{2}(3)$, $^{2}F_{4}(2)'$, $M_{11}$, $M_{12}$, $M_{24}$, $HS$, $M_{c}L$, $Co_{2}$ or $Co_{3}$, $Y_{i}<S$ for each $1\le i\le t(S)$, and $H$ is contained in at least one of the groups $Y_{i}$. In each case, we can easily see that there is a prime $p$, with $p$ dividing $|S:Y_{i}|$ for each $i$ in $1\le i\le t(S)$.\end{enumerate}
This completes the proof.\end{proof}

\section{The proof of Theorem \ref{Min}}
Before proceeding to the proof of Theorem \ref{Min}, we need three lemmas.
\begin{Lemma}\label{MinTransLemma}  Let $G$ be a transitive subgroup of $S_{n}$ ($n\ge 1$), let $1\neq M$ be a normal subgroup of $G$, and let $\Omega$ be the set of $M$-orbits. Then\begin{enumerate}[(i)]
\item Either $M$ is transitive, or $\Omega$ forms a system of blocks for $G$. In particular, the size of an $M$-orbit divides $n$. \item $|\Omega|=|G:AM|$, where $A$ is a point stabiliser in $G$.
\item If $G$ is minimally transitive, then $G^{\Omega}$ acts minimally transitively on $\Omega$.\end{enumerate}\end{Lemma}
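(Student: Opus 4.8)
The plan is to prove the three parts of Lemma \ref{MinTransLemma} in order, each following from standard facts about transitive actions and blocks.

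For part (i), the key observation is that since $M \unlhd G$, the set $\Omega$ of $M$-orbits is $G$-invariant: if $x \in G$ and $\Delta$ is an $M$-orbit, then $\Delta x$ is again an $M$-orbit (because $M^x = M$). Since $G$ is transitive on the $n$ points, it permutes $\Omega$ transitively. If $M$ is intransitive then $|\Omega| > 1$, and a $G$-invariant partition into more than one part, each of size greater than $1$ (all orbits have the same size by transitivity of $G$ on $\Omega$), is precisely a nontrivial block system; hence the common orbit size divides $n$. If $M$ is transitive then $\Omega$ is the trivial one-block partition. Either way the orbit size divides $n$.

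For part (ii), fix a point $\alpha$ and let $A = G_\alpha$ be its stabiliser, and let $\Delta$ be the $M$-orbit containing $\alpha$, i.e. $\Delta = \alpha M$. The stabiliser in $G$ of the block $\Delta$ (as an element of $\Omega$) is exactly $AM$: indeed $AM$ fixes $\Delta$ since both $A$ and $M$ do, and conversely if $x$ fixes $\Delta$ setwise then $\alpha x \in \Delta = \alpha M$, so $\alpha x = \alpha m$ for some $m \in M$, whence $xm^{-1} \in A$ and $x \in AM$. Since $G$ is transitive on $\Omega$, the orbit–stabiliser theorem gives $|\Omega| = |G : AM|$.

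For part (iii), suppose $G$ is minimally transitive; I must show $G^\Omega$ acts minimally transitively on $\Omega$. It is transitive by part (i). Let $\bar H \le G^\Omega$ be a subgroup transitive on $\Omega$, and let $H$ be its full preimage in $G$, so $H \supseteq \ker(G \to G^\Omega)$ and $H$ is transitive on $\Omega$. Then $H M / M$... more directly: $HA$ acts transitively on $\Omega$? Actually the cleanest route: since $H$ maps onto $\bar H$ which is transitive on $\Omega = \{M\text{-orbits}\}$, the group $HM$ is transitive on the $n$ points (it hits every block, and within a block $M \le HM$ is transitive). By minimal transitivity of $G$, we get $HM = G$; since $M \le \ker(G \to G^\Omega)$, this forces $\bar H = (HM)^\Omega = G^\Omega$. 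Hence $G^\Omega$ has no proper transitive subgroup, i.e. it is minimally transitive on $\Omega$.

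The routine character of the argument means there is no real obstacle; the only point requiring a little care is the identification of the block stabiliser as $AM$ in part (ii) and, in part (iii), the passage from a transitive subgroup of the quotient $G^\Omega$ to the transitive subgroup $HM$ of $G$ — one must remember to include the kernel of $G \to G^\Omega$ (equivalently, to multiply by $M$) so that transitivity is preserved both on blocks and within each block.
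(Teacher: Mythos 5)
Your argument is correct, but it is not quite the route the paper takes. For part (ii) you identify the setwise stabiliser of the block $\Delta=\alpha M$ as $AM$ and apply orbit--stabiliser, whereas the paper counts directly: each $M$-orbit has size $|M:M\cap A|=|AM:A|$, so $|\Omega|=n/|AM:A|=|G:AM|$; the two computations are essentially equivalent, and your stabiliser identification ($x\Delta=\Delta \Rightarrow \alpha x\in\alpha M \Rightarrow x\in AM$, using $M\unlhd G$) is sound, including in the degenerate case where $M$ is transitive. The real divergence is part (iii): the paper does not prove it at all but cites Theorem 2.4 of Dalla Volta--Siemons, while you give a short self-contained proof --- pull a transitive subgroup $\bar H\le G^{\Omega}$ back to its full preimage $H$ (which contains the kernel of $G\to G^{\Omega}$ and hence $M$), observe that $H=HM$ is transitive on points because it is transitive on blocks and $M$ is transitive within each block, invoke minimal transitivity of $G$ to get $HM=G$, and project back to conclude $\bar H=G^{\Omega}$. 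This is exactly the standard argument behind the cited result, so your write-up buys self-containedness at the cost of a few lines; the paper's citation buys brevity. No gaps: the one point needing care, remembering to adjoin $M$ (equivalently the kernel) so that transitivity on blocks upgrades to transitivity on points, is handled correctly.
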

\begin{proof} Part (i) is clear, so we prove (ii): if $M$ is transitive, then $AM=G$, so $|\Omega|=1=|G:AM|$. Otherwise, part (i) implies that the size of each $M$-orbit is $|M:M\cap A|=|AM:A|$, so the number of $M$-orbits is $n/|AM:A|=|G:AM|$. Part (ii) follows. Finally, part (iii) is Theorem 2.4 in \cite{DallaVolta}.\end{proof}

\begin{Lemma}[{\bf\cite{LucMor}, Proof of Lemma 3}]\label{CAutN} Let $L$ be a finite group with a unique minimal normal subgroup $N$, which is nonabelian, and write $N\cong S^{t}$, where $S$ is a nonabelian simple group. Then $|C_{\Aut{(N)}}(L/N)|\le t|S|^{t}|\Out{(S)}|$.\end{Lemma}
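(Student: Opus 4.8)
The plan is to realise $L$ inside $\Aut(N)$ and then read the bound off from the wreath-product description of $\Out(N)$. First I would observe that, since $N$ is the unique minimal normal subgroup of $L$ and is nonabelian, the normal subgroup $C_{L}(N)$ satisfies $C_{L}(N)\cap N=Z(N)=1$ (here $Z(N)=1$ because $N\cong S^{t}$ with $S$ nonabelian simple), so a nontrivial $C_{L}(N)$ would have to contain $N$, which is absurd; thus $C_{L}(N)=1$ and conjugation embeds $L$ into $\Aut(N)$, with $N$ identified with $\Inn(N)$. Writing $N=S_{1}\times\dots\times S_{t}$, the $S_{i}$ are precisely the minimal normal subgroups of $N$, hence permuted by $L$; this permutation action is transitive, since otherwise the product of the $S_{i}$ lying in one $L$-orbit would be a proper nontrivial normal subgroup of $L$ contained in $N$. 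Recall that $\Aut(N)\cong\Aut(S)\wr\Sym(t)$ and correspondingly $\Out(N)\cong\Out(S)\wr\Sym(t)=B\rtimes\Sym(t)$ with $B=\Out(S)^{t}$; since inner automorphisms of $N$ preserve each $S_{i}$, the image $\overline{T}$ of $L$, equivalently of $L/N$, in $\Sym(t)$ under the projection $\Aut(N)\to\Sym(t)$ is a transitive subgroup.

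Next I would reduce to a statement purely about $\Out(N)$. The conjugation action of $\Aut(N)$ on the subgroup $L/N\le\Out(N)$ factors through $\Aut(N)/\Inn(N)=\Out(N)$, so $C_{\Aut(N)}(L/N)$ is exactly the preimage in $\Aut(N)$ of $C_{\Out(N)}(L/N)$; as this projection has kernel $\Inn(N)\cong N$, of order $|S|^{t}$, it suffices to prove $|C_{\Out(N)}(L/N)|\le t\,|\Out(S)|$.

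For this, put $W=\Out(S)\wr\Sym(t)$ and $\overline{L}=L/N\le W$. The projection $W\to\Sym(t)$ carries $C_{W}(\overline{L})$ into $C_{\Sym(t)}(\overline{T})$, and since $\overline{T}$ is transitive the centraliser $C_{\Sym(t)}(\overline{T})$ is semiregular on $\{1,\dots,t\}$, so its order divides $t$. The kernel of this projection restricted to $C_{W}(\overline{L})$ is $C_{B}(\overline{L})$, whence $|C_{W}(\overline{L})|\le t\,|C_{B}(\overline{L})|$. Finally I would check that the projection $C_{B}(\overline{L})\to\Out(S)$ onto the first coordinate is injective: given $b=(b_{1},\dots,b_{t})\in C_{B}(\overline{L})$ and any index $i$, transitivity of $\overline{L}$ on the factors supplies an element of $\overline{L}$ whose $\Sym(t)$-component sends $1$ to $i$, and computing the commutator of $b$ with that element in the wreath product forces $b_{i}$ to be a conjugate of $b_{1}$ in $\Out(S)$, hence determined by $b_{1}$. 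Therefore $|C_{B}(\overline{L})|\le|\Out(S)|$, and combining the inequalities above gives $|C_{\Aut(N)}(L/N)|=|S|^{t}\,|C_{\Out(N)}(L/N)|\le t\,|S|^{t}\,|\Out(S)|$.

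I expect the only genuinely delicate point to be the last step — injectivity of the first-coordinate projection on $C_{B}(\overline{L})$ — which is also the one place where transitivity of $L$ on the simple factors of $N$ is essential; the identification $\Aut(S^{t})\cong\Aut(S)\wr\Sym(t)$, the semiregularity of $C_{\Sym(t)}(\overline{T})$, and the explicit wreath-product multiplication underlying the commutator computation are all standard bookkeeping.
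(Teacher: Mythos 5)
Your argument is correct: since $N$ is the unique minimal normal subgroup and is nonabelian, $C_L(N)=1$, so $L/N$ embeds in $\Out(N)\cong\Out(S)\wr\Sym(t)$ with transitive image in $\Sym(t)$, and your two estimates (semiregularity of the centraliser of a transitive group, and injectivity of the first-coordinate projection on the base-group centraliser) give $|C_{\Out(N)}(L/N)|\le t\,|\Out(S)|$, hence the stated bound after multiplying by $|\Inn(N)|=|S|^{t}$. The paper itself offers no proof, deferring to the proof of Lemma 3 in \cite{LucMor}, and your argument is essentially that standard one, so there is nothing to add.
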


\begin{Lemma}[{\bf\cite{LieShal}, Proposition 4.4}]\label{SOutS} Let $S$ be a nonabelian finite simple group. Then $|\Out{(S)}|\le |S|^{1/4}$.\end{Lemma}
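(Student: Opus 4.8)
The plan is to invoke the classification of finite simple groups and verify the bound for each family in turn, the groups of Lie type carrying essentially all the weight. If $S=A_{r}$ is an alternating group ($r\ge 5$), then $|\Out{(S)}|\le 4$, with equality only for $r=6$; since $|A_{5}|=60$ and $|A_{6}|=360$ settle those two cases directly, and $r!/2\ge 2520>4^{4}$ for $r\ge 7$, the inequality $|\Out{(S)}|^{4}\le |S|$ follows. If $S$ is sporadic, or the Tits group $^{2}F_{4}(2)'$, then $|\Out{(S)}|\le 2$ while $|S|\ge |M_{11}|=7920>16$, so there is nothing to do.

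So suppose $S$ is of Lie type over a field of order $q=p^{f}$, of untwisted Lie rank $\ell$. Here I would use that $\Out{(S)}$ is generated by diagonal, field and graph automorphisms, so that $|\Out{(S)}|=dfg$ with $d\le \ell+1$ (and $d\le 4$ outside type $A$), $g\le 6$ (and $g\le 2$ unless $S=P\Omega^{+}_{8}(q)$), and $f\le \log_{2}q$; whereas $|S|$ exceeds a fixed positive multiple of $q^{h(S)}$, where the exponent $h(S)\ge 3$ is determined by the type, equals $3$ only for $L_{2}(q)$, and grows without bound as $\ell$ increases (for the Suzuki and Ree families $^{2}B_{2}(q)$, $^{2}G_{2}(q)$, $^{2}F_{4}(q)$ one has in addition $|\Out{(S)}|=f$ and $h(S)=5,7,26$ respectively). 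Since the left-hand side of $(dfg)^{4}\le |S|$ is only polynomial in $\ell$ and $\log q$, whereas the right-hand side is, for fixed type, a fixed positive power of $q$, and for fixed $q$ grows rapidly with $\ell$, this crude comparison already yields the inequality for all but finitely many groups, all of small rank and over a small field.

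It then remains to dispose of this bounded list --- in practice the groups $L_{2}(q)$, $L_{3}(q)$, $L_{4}(q)$, $U_{3}(q)$, $U_{4}(q)$, $PSp_{4}(q)$, $P\Omega^{+}_{8}(q)$ over small fields, together with the smallest Suzuki and Ree groups --- and I would finish by computing $|\Out{(S)}|$ and $|S|$ exactly for each, either from the standard description of the automorphism group of a simple group of Lie type or directly with MAGMA, and then checking the inequality individually. I expect this final, finite, case analysis to be the main obstacle: the bound is essentially an equality for the smallest groups of type $A$ --- for instance $|\Out{(L_{3}(4))}|=12$ against $|L_{3}(4)|^{1/4}\approx 11.9$ --- so the estimate has genuinely to be pushed to its limit, and one must take care in bookkeeping the diagonal automorphisms of $L_{n}(q)$ and $U_{n}(q)$, whose group has order $\gcd(n,q\mp 1)$ and can be as large as $n$.
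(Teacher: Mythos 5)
The first thing to say is that the paper contains no proof of this lemma to compare yours with: it is imported wholesale from Proposition 4.4 of \cite{LieShal}, so the only question is whether your classification-based verification would actually succeed. Your architecture is the natural one: the alternating and sporadic cases are immediate, and for groups of Lie type one writes $|\Out(S)|$ as a product of diagonal, field and graph parts, compares a quantity polynomial in the rank and $\log q$ against a fixed positive power of $q$, and reduces to a finite list of small-rank groups over small fields. The asymptotic half of your argument is fine.

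The gap is in the half you defer. The residual finite check, which you assert can be settled ``individually'' and which you correctly identify as carrying all the difficulty, in fact fails, and your own example shows it: $|\Out(L_{3}(4))|=12$ while $|L_{3}(4)|=20160$, and $12^{4}=20736>20160$, so $|\Out(L_{3}(4))|>|L_{3}(4)|^{1/4}$. This is not ``essentially an equality'' but a violation of the inequality you set out to prove, so your proposed proof cannot terminate as written: carried out honestly, the case analysis produces $L_{3}(4)$ (diagonal part $\gcd(3,q-1)=3$, field part $2$, graph part $2$) as an exception to the statement as literally given, and no care with the remaining small cases can repair a universal claim with a counterexample. To salvage the situation you would need either to consult the exact formulation and hypotheses of the cited Proposition 4.4 of \cite{LieShal}, or to prove the lemma with $L_{3}(4)$ listed as an exception and then check separately that the one place it is used, the passage to inequality (4.1) in the proof of Theorem \ref{Min}, still holds for $S=L_{3}(4)$ --- it does, with enormous slack, since with $f(S)=4$ one needs $4\mu(n)+1\le 53|S|^{t\mu(n)}/(90\,t\,|\Out(S)|)$ and the right-hand side is already about $989$ when $t=\mu(n)=1$. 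Deferring the decisive finite verification when its first nontrivial instance fails is a genuine gap, not a bookkeeping detail.
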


The preparations are now complete.
\begin{proof}[Proof of Theorem \ref{Min}] Assume that the theorem is false, and let $G$ be a counterexample of minimal degree. Also, let $A$ be the stabiliser in $G$ of a point $\alpha$, and let $m:=\mu(n)+1$. 

First, we claim that $G$ needs more generators than any proper quotient of $G$. To this end, let $M$ be a normal subgroup of $G$, and let $K$ be the kernel of the action of $G$ on the set of $M$-orbits. Then $G/K$ is minimally transitive of degree $s:=|G:AM|$, by Lemma \ref{MinTransLemma}, and hence, since $s$ divides $n$, the minimality of $G$ implies that there exists elements $x_{1}$, $x_{2}$, $\hdots$, $x_{m}$ in $G$ such that $G=\langle x_{1},x_{2},\hdots,x_{m},K\rangle$. But then $H:=\langle x_{1},x_{2},\hdots,x_{m}\rangle$ acts transitively on the set of $M$-orbits, so $HM=G$ by minimal transitivity of $G$. Hence $d(G/M)\le m$, which proves the claim.

Hence, by Theorem \ref{CrownTheorem}, $G\cong L_{k}$, for some $k\ge 2$, and some group $L$ with a unique minimal normal subgroup $N$, which is either nonabelian, or complemented in $L$. We now fix some notation: write $\Soc{(G)}=N_{1}\times N_{2}\times \hdots\times N_{k}$, where each $N_{i}\cong N\cong S^{t}$, for some simple group $S$, and $t\ge 1$, and set $X_{i}:=N_{1}\times N_{2}\times\hdots\times N_{i}$. We will also write $X_{0}:=1$, $H_{i+1}=N_{i+1}\cap X_{i}A$, and we denote by $\Delta_{i}$ the $X_{i}$-orbit containing $\alpha$, for $0\le i\le k$. Then $|\Delta_{i}|=n|X_{i}A|/|G|$ by Lemma \ref{MinTransLemma} part (ii), and hence
$$\frac{|\Delta_{i+1}|}{|\Delta_{i}|}=\frac{|X_{i+1}A|}{|X_{i}A|}=\frac{|N_{i+1}X_{i}A|}{|X_{i}A|}=|N_{i+1}:H_{i+1}|$$
Furthermore, it is shown in the proof of the main theorem in \cite{LucMin}, that $|\Delta_{i+1}|/|\Delta_{i}|=|N_{i+1}:H_{i+1}|$ is greater than $1$ for $0\le i\le k-2$, and also for $i=k-1$ if $N$ is abelian. Note also that $G/\Soc{(G)}\cong L/M$ is $m$-generated, by the previous paragraph; thus, $L$ is $m$-generated (see \cite{Luc3}).

We now separate the cases of $N$ being abelian or nonabelian. If $N$ is abelian, then $N\cong C_{p}^{t}$, for some prime $p$, so by the previous paragraph, $p$ divides $|N_{i+1}:H_{i+1}|=|\Delta_{i+1}|/|\Delta_{i}|$ for each $0\le i\le k-1$. Thus, $p^{k}$ divides $|\Delta_{k}|$, and hence divides $n$, by Lemma \ref{MinTransLemma} part (i). It follows that $k\le \mu(n)$, which, by Theorem \ref{CrownBound} part (i), contradicts our assumption that $d(G)>\mu(n)+1$.

Thus, $N$ is nonabelian. Hence, by the third paragraph, for each $i$ in $0\le i\le k-2$, $N_{i+1}$ has a direct factor $S_{i+1}$ ($S_{i+1}\cong S$), with $|S_{i+1}:S_{i+1}\cap H_{i+1}|>1$. Let $\Gamma=\Gamma(S)$ be the set of primes in Lemma \ref{SimplePrimes}, so that $|\Gamma|\le f(S)$, where $f(S)$ is as defined in Lemma \ref{SimplePrimes}. Then Lemma \ref{SimplePrimes} implies that for each $0\le i\le k-2$, the index $|S_{i+1}:S_{i+1}\cap H_{i+1}|$, and hence $|\Delta_{i+1}|/|\Delta_{i}|=|N_{i+1}:H_{i+1}|$, is divisible by some prime $p_{i+1}$ in $\Gamma$. 

So we now have a list of primes $p_{1}$, $p_{2}$, $\hdots$, $p_{k-1}$, with each $p_{i}$ in $\Gamma$, such that the product  $\prod_{i=1}^{k-1} p_{i}$ divides $|\Delta_{k-1}|$. For each prime $p$ in $\Gamma$, let $a_{(p)}$ be the number of times that $p$ occurs in this product. Then, since $|\Delta_{k-1}|$ divides $n$ by Lemma \ref{MinTransLemma} (i), $\prod_{p\in \Gamma} p^{a_{(p)}}$ divides $n$. Since $|\Gamma|\le f(S)$, and $\sum_{p\in \Gamma} a_{(p)}=k-1$, we have $a_{(p)}\ge (k-1)/f(S)$ for at least one prime $p$ in $\Gamma$. Hence, $(k-1)/f(S)\le \mu(n)$, and it follows that
\begin{align}k\le f(S)\mu(n)+1 &\le \frac{53|S|^{t\mu(n)}}{90t|\Out{(S)}|}\\
&\le \frac{53|N|^{m}}{90|C_{\Aut{(N)}}(L/N)|} &\text{( by Lemma \ref{CAutN})}\\
&\le \frac{P_{L,N}(m)|N|^{m}}{|C_{\Aut{(N)}}(L/N)|} &\text{( by Theorem \ref{53Over90})}\end{align}
The inequality at (4.1) above follows easily when $S$ is an alternating group of degree $r$, since $|S|=r!/2$, and $|\Out({S})|\le 4$ in this case (also, $|\Out({S})|\le 2$ if $r\neq 6$). It also follows easily when $S$ is not an alternating group, using Lemma \ref{SOutS}. Now, by Theorem \ref{CrownBound} part (ii), the inequality at (4.3) contradicts our assumption that $d(G)>m$. This completes the proof.\end{proof}  

\emph{Acknowledgments:} The author is hugely grateful to his supervisor Professor D.F. Holt for his careful reading of the paper, and to the Engineering and Physical Sciences Research Council for their continued support.        

\end{document}